\newtheorem{alg}{Algorithm}
\newtheorem{thm}{Theorem}[section]
\newtheorem{lemma}[thm]{Lemma} 
\theoremstyle{definition}
\newtheorem{defn}[thm]{Definition}
\newtheorem{conj}[thm]{Conjecture}
\newtheorem{example}[thm]{Example}
\newtheorem{rmk}[thm]{Remark}
\newcommand{\F}{\mathbb F}
\newcommand{\Fq}{\mathbb{F}_q}
\newcommand{\PP}{\mathbb P}
\newcommand{\PPFq}{\mathbb{P}^2(\mathbb{F}_q)}
\newcommand{\ISS}{Iampolskaia, Skorobogatov, and Sorokin}
\newcommand{\ISSS}{Iampolskaia, Skorobogatov, and Sorokin }
\newcommand{\PGL}{{\operatorname{PGL}}}
\newif \ifdetails
\begin{document}

\title{Counting Arcs in Projective Planes via Glynn's Algorithm}

\author[Kaplan]{Nathan Kaplan}
\address{Department of Mathematics, University of California, Irvine, 92697}
\email{nckaplan@math.uci.edu}

\author[Kimport]{Susie Kimport}
\address{Department of Mathematics, Stanford University, Stanford, CA 94305}
\email{skimport@stanford.edu}    

\author[Lawrence]{Rachel Lawrence}
\address{Department of Mathematics, Yale University, New Haven, CT 06511}
\email{rlaw@aya.yale.edu}    

\author[Peilen]{Luke Peilen}
\address{Department of Mathematics, Yale University, New Haven, CT 06511}
\email{luke.peilen@yale.edu}    

\author[Weinreich]{Max Weinreich}
\address{Department of Mathematics, Yale University, New Haven, CT 06511}
\email{maxhweinreich@gmail.com}    
\subjclass{Primary 51E20; Secondary  51A35}
\keywords{Arcs, Finite Projective Planes, Non-Desarguesian Projective Planes, Incidence Structures, Configurations of Points and Lines, Linear Spaces}

\date{May 18, 2017}

\maketitle

\begin{abstract}
An $n$-arc in a projective plane is a collection of $n$ distinct points in the plane, no three of which lie on a line. Formulas counting the number of $n$-arcs in any finite projective plane of order $q$ are known for $n \le 8$. In 1995, \ISSS counted $9$-arcs in the projective plane over a finite field of order $q$ and showed that this count is a quasipolynomial function of $q$. We present a formula for the number of $9$-arcs in any projective plane of order $q$, even those that are non-Desarguesian, deriving \ISS's formula as a special case. We obtain our formula from a new implementation of an algorithm due to Glynn; we give details of our implementation and discuss its consequences for larger arcs.
\end{abstract}

\section{Introduction}

We begin by recalling the basic definitions needed to describe the problem of counting $n$-arcs in finite projective planes.
\begin{defn} \label{projectiveplane}
A \emph{projective plane} $\Pi$ is a collection of points $\mathcal{P}$ and a collection of lines $\mathcal{L}$, where each $\ell \in \mathcal{L}$ is a subset of $\mathcal{P}$ such that:
\begin{enumerate}
\item Every two points are incident with a unique line; that is, given distinct points $p_1, p_2, \in \mathcal{P}$ there exists a unique $\ell \in \mathcal{L}$ such that $\{p_1, p_2\} \subset \ell$.

\item Every two lines are incident with a unique point; that is, given distinct lines $\ell_1, \ell_2 \in \mathcal{L}$ there exists a unique $p \in \mathcal{P}$ with $p \in \ell_1 \cap \ell_2$.

\item There exist four points such that no three of them are contained in any line.
\end{enumerate}
Let $q$ be a positive integer. We say that $\Pi$ has \emph{order} $q$ if each line contains exactly $q+1$ points, and if each point is contained in exactly $q+1$ lines. 
\end{defn}

The projective plane over a field $k$, denoted $\PP^2(k)$, gives a well-studied algebraic class of examples. Finite projective planes that are isomorphic to $\PP^2(\F_q)$ for some finite field $\F_q$ are called \emph{Desarguesian}, and all other finite projective planes are called \emph{non-Desarguesian}. For an overview of the theory of non-Desarguesian projective planes, see \cite{weibel}.

Our goal is to count special configurations of points called $n$-arcs.
\begin{defn}
An \emph{$n$-arc} in a projective plane $\Pi$ is a collection of $n$ distinct points, no three of which are collinear.
\end{defn}
Arcs are collections of points in \emph{linear general position}, a fundamental concept in classical and algebraic geometry. In an infinite projective plane, most collections of points form arcs, but in finite projective planes, interesting enumerative problems arise.

For simplicity, throughout this paper we  count \emph{ordered $n$-arcs}, that is, $n$-tuples of points that form an arc, and we often omit the adjective ordered.  The number of ordered $n$-arcs in a projective plane is equal to the number of unordered $n$-arcs in the plane multiplied by $n!$.
\begin{defn}
Let $\Pi$ be a projective plane of order $q$. Define $C_n(\Pi)$ as the number of ordered $n$-arcs of $\Pi$.  In the case where $\Pi$ is the projective plane $\PPFq$, we write $C_n(q)$ in place of $C_n(\Pi)$.
\end{defn}
For small values of $n$ we can determine $C_n(q)$ using the algebraic structure of $\PPFq$.  For example, the homography group of $\PPFq$ is $\PGL_3(\F_q)$, which acts sharply transitively on collections of four points, no three of which lie on a line (that is, ordered 4-arcs).  Therefore, 
\[
C_4(q) = |\PGL_3(\F_q)| = (q^2+q+1)(q^2+q)q^2(q-1)^2.
\]  
An ordered $4$-arc comes from choosing a point $P$, a different point $Q$, another point $R$ not on the line $\overline{PQ}$, and another point not on the union of the lines $\overline{PQ}, \overline{PR}, \overline{QR}$.  There are $q^2+q+1$ choices for $P,\ q^2+q$ choices for $Q,\ q^2$ for $R$, and $(q-1)^2$ for $S$, so exactly the same formula holds in \emph{any} projective plane $\Pi$ of order $q$.  Similar but more intricate ideas lead to polynomial formulas for all $n \le 6$. 

\begin{thm}\cite[Theorem 4.1]{glynn1988}
For any finite projective plane $\Pi$ of order $q$,
\begin{enumerate}
\item 
$ C_1(\Pi) = C_1(q)  = q^2 + q + 1$, \\
\item
$ C_2(\Pi) = C_2(q) = (q^2 + q + 1)(q^2 + q)$, \\
\item 
$ C_3(\Pi) = C_3(q) = (q^2 + q + 1)(q^2 + q)q^2$, \\
\item 
$ C_4(\Pi) = C_4(q) = (q^2 + q + 1)(q^2 + q)q^2 (q-1)^2$, \\
\item
$ C_5(\Pi) = C_5(q) = (q^2 + q + 1)(q^2 + q)q^2 (q-1)^2(q^2-5q+6)$,\\
\item 
$ C_6(\Pi) = C_6(q) = (q^2 + q + 1)(q^2 + q)q^2 (q-1)^2(q^2-5q+6)(q^2 - 9q + 21).$ 
\end{enumerate}
\end{thm}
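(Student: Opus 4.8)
The plan is to count ordered arcs by the greedy ``one point at a time'' method already used in the excerpt for $C_1$ through $C_4$: writing $C_n(\Pi) = \prod_{k=1}^{n} a_k(q)$, where $a_k(q)$ is the number of legal choices for the $k$-th point once the first $k-1$ points have been fixed as an arc. The content of the theorem is that each $a_k(q)$, for $k \le 6$, depends only on $q$, and not on the chosen partial arc nor on $\Pi$. Since $a_1 = q^2+q+1$, $a_2 = q^2+q$, $a_3 = q^2$, and $a_4 = (q-1)^2$ are established in the text, it remains to produce $a_5 = q^2-5q+6$ and $a_6 = q^2-9q+21$.

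For a fixed arc $P_1,\dots,P_{k-1}$, a point is legal as $P_k$ exactly when it lies on none of the $\binom{k-1}{2}$ connecting lines $\ell_{ij}=\overline{P_iP_j}$, so $a_k(q) = (q^2+q+1) - |\bigcup_{i<j}\ell_{ij}|$. I would compute the size of this union through the incidence identity $\sum_{i<j}|\ell_{ij}| = \sum_P m_P$, where $m_P$ is the number of connecting lines through $P$ and each line has $q+1$ points. The key is to pin down the \emph{multiple points} of the arrangement: the arc points $P_i$, through which pass the $k-2$ lines joining $P_i$ to the others, and the \emph{diagonal points} $\ell_{ij}\cap\ell_{kl}$ arising from disjoint index pairs. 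For $k-1=4$ (computing $a_5$) there are three diagonal points; for $k-1=5$ (computing $a_6$) there are fifteen.

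The crux, and the step I expect to be the main obstacle, is showing that these diagonal points are in general position: distinct from the arc points, distinct from one another, and each lying on exactly two connecting lines. All three facts follow from a single pigeonhole argument on the index sets combined with the no-three-collinear axiom. If a point $X$ lay on two connecting lines sharing an index, say $\ell_{ij}$ and $\ell_{ik}$, then $X = \ell_{ij}\cap\ell_{ik} = P_i$, so any genuine diagonal point sits on two lines with disjoint index pairs; tracing through the possible overlaps of two such disjoint-pair diagonal points always forces two of the four lines involved to share an index, collapsing $X$ onto an arc point and contradicting the arc condition. The same argument rules out a triple concurrence away from the arc points: three connecting lines meeting at a non-arc point would require three pairwise-disjoint index pairs, hence six distinct indices, which is impossible among the five points of the arc. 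It is precisely this count that forces us to stop at $n=6$: for six-point arcs three disjoint pairs do fit, lines such as $\ell_{12},\ell_{34},\ell_{56}$ may or may not concur depending on $\Pi$, and the uniformity breaks.

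With general position in hand, the incidence sum collapses cleanly. For $a_5$ there are six lines, four arc points with $m_P=3$, and three diagonal points with $m_P=2$, giving $|\bigcup\ell_{ij}| = 6(q+1) - (4\cdot 2 + 3\cdot 1) = 6q-5$ and hence $a_5 = q^2-5q+6$. For $a_6$ there are ten lines, five arc points with $m_P=4$, and fifteen diagonal points with $m_P=2$, giving $|\bigcup\ell_{ij}| = 10(q+1) - (5\cdot 3 + 15\cdot 1) = 10q-20$ and hence $a_6 = q^2-9q+21$. Multiplying the $a_k$ yields the stated formulas, valid in every projective plane of order $q$ since nothing beyond the incidence axioms of Definition~\ref{projectiveplane} and the no-three-collinear condition was used.
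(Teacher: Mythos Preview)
Your argument is correct and is precisely the ``similar but more intricate'' computation the paper alludes to but does not carry out (the paper only sketches $C_1$--$C_4$ and then cites Glynn for the remaining cases). The inclusion--exclusion via the multiplicity count $|\bigcup \ell_{ij}| = \sum_{i<j}|\ell_{ij}| - \sum_{P}(m_P-1)$ is exactly the right device, and your pigeonhole justification that no non-arc point can lie on three connecting lines of a $5$-arc (since three pairwise disjoint index pairs would need six indices) is the clean reason the method terminates at $n=6$. One small point worth making explicit in a final write-up: the claim that a diagonal point $\ell_{ab}\cap\ell_{cd}$ with $\{a,b\}\cap\{c,d\}=\emptyset$ is never an arc point deserves its own sentence (if it equalled $P_i$, then $P_i\in\ell_{ab}$ forces $i\in\{a,b\}$ by the arc condition, and likewise $i\in\{c,d\}$, contradicting disjointness); you use this implicitly in the ``collapsing $X$ onto an arc point and contradicting the arc condition'' step.
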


Formulas for $C_7(\Pi)$ and $C_8(\Pi)$ are known, but are no longer given by a single polynomial in $q$ for any finite projective plane $\Pi$.  These formulas involve the number of \emph{strong realizations} of certain special configurations of points and lines.

\begin{defn}[\cite{batten}]
A \textit{linear space} is a pair of sets $(\mathcal{P}, \mathcal{L})$, the elements of which are referred to as \textit{points} and \textit{lines} respectively, such that: 
\begin{enumerate}
\item each line is a subset of $\mathcal{P}$, 
\item any two distinct points belong to exactly one line, 
\item each line contains at least two points. 
\end{enumerate}
A pair $(p,l)$ consisting of a point $p$ and a line $l$ containing $p$ is called an \emph{incidence} (or \emph{flag}) of the linear space.
\end{defn}

Linear spaces capture the basic notions of incidence geometry without reference to a particular projective plane. Thus, a linear space may be thought of as a combinatorial blueprint, the incidence data of which may or may not be satisfied by any given set of points and lines in a projective plane. For instance, every $n$-arc encapsulates the data of the linear space $(\{1,2,\ldots,n\}, T_n),$ where $T_n$ is the set of $2$-subsets of $\{1,2,\ldots,n\}$. To formalize this notion, we recall the definition of a \textit{strong realization}.

\begin{defn}[\cite{grunbaum2009}] \label{strongreal1}
For a linear space $S = (\mathcal{P}, \mathcal{L})$, a strong realization of $S$ in a projective plane $\Pi$ is a one-to-one assignment $s: \mathcal{P} \to \Pi$ such that each subset $Q$ of $\mathcal{P}$ is contained in a line of $S$ if and only if $s(Q)$ is contained in a line of $\Pi$. 
\end{defn}

A special class of linear spaces called \emph{superfigurations} play an important role in the formulas we give.

\begin{defn}[\cite{grunbaum2009}]
Let a \emph{full line} of a linear space be a line containing more than two points. A (combinatorial) \emph{$n_k$-configuration} is a linear space on $n$ points with $n$ full lines, such that each full line contains exactly $k$ points, and each point is contained in exactly $k$ full lines.  

A linear space on $n$ points with some number of full lines, not necessarily $n$, in which each full line contains \emph{at least} $k$ points and each point is contained in \emph{at least} $k$ full lines is called an \emph{$n_k$-superfiguration}.  Since every configuration is a superfiguration, but not conversely, we state all of our results and definitions in terms of superfigurations.
\end{defn} 

Throughout this paper all $n_k$-configurations we encounter will have $k=3$, so we refer to $n_3$-configurations simply as \emph{configurations}.  Similarly, we refer to $n_3$-superfigurations as \emph{superfigurations}.

\begin{rmk}\label{notationremark}
Glynn \cite{glynn1988} refers to connected superfigurations as ``variables'', and \ISSS \cite{iss} call superfigurations ``overdetermined configurations''. We have opted to call them superfigurations for the sake of consistency with Gr\"unbaum's text  on classical configurations of points and lines \cite{grunbaum2009}, and to distinguish them from other uses of the terms ``variable" and ``configuration''.
\end{rmk}

In this paper we will be concerned with the collection of all superfigurations on at most $n$ points, so we introduce a notion of what is means for two linear spaces to be `the same'.  See \cite[Section 1.7 and Lemma 2.6.2]{batten2}.
\begin{defn}
Let $S = (P,L)$ and $S' = (P',L')$ be finite linear spaces and let $f$ be a bijective function from $P$ to $P'$.  Then $f$ is an \emph{isomorphism} of linear spaces if and only if for any $\ell \in L,\ f(\ell) \in L'$.
\end{defn}
There are no superfigurations with $6$ or fewer points.  Up to isomorphism, there is a unique superfiguration with $7$ points, which is called the \emph{Fano plane}, and a unique superfiguration with $8$ points, which is called the \emph{M\"obius-Kantor} configuration.  Note that isomorphic linear spaces have the same number of strong realizations in any projective plane.  We define a counting function for strong realizations that occurs in our formulas.

\begin{defn}
For a linear space $f$, let $A_f(\Pi)$ be the number of $n$-tuples of distinct points of $\Pi$ such that the points are the image of a strong realization in $\Pi$ of a linear space that is isomorphic to $f$.
\end{defn}

With this language, we can restate our problem as follows: give a formula for $A_a(\Pi)$, where $a$ is the linear space $(\{1,\ldots,n\},T_n)$ as defined previously. It is clear from the definition that when $f$ and $g$ are isomorphic linear spaces, $A_f(\Pi) = A_g(\Pi)$.

Glynn gives formulas for the number of $7$-arcs and the number of $8$-arcs in a projective plane $\Pi$ of order $q$.  When $s$ is the Fano plane, we write $A_7(\Pi)$ in place of $A_s(\Pi)$, and when $s$ is the M\"obius-Kantor configuration, we write $A_8(\Pi)$ for $A_s(\Pi)$.

\begin{thm}\cite[Theorems 4.2 and 4.4]{glynn1988}\label{78arcs}
Let $\Pi$ be a projective plane of order $q$.  Then
\begin{enumerate}[ {(}1{)} ]
\item
$ C_7(\Pi) = (q^2 + q + 1)(q^2 + q)q^2 (q-1)^2 (q - 3)(q - 5) (q^4 - 20q^3 + 148q^2 - 468q + 498) - A_{7}(\Pi), $ \\
\item
$ C_8(\Pi) =   (q^2 + q + 1)(q^2 + q)q^2 (q-1)^2 (q-5) \\ 
 \cdot (q^7 - 43q^6 + 788q^5 - 7937q^4 + 47097q^3 - 162834q^2 + 299280q - 222960) \\ - 8(q^2 - 20q + 78)A_{7}(\Pi) + A_{8}(\Pi)$.
\end{enumerate}
\end{thm}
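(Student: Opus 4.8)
The plan is to count arcs by an inclusion--exclusion over the collinear triples that an ordered $n$-tuple of distinct points can exhibit. For a set $T$ of $3$-subsets of $\{1,\dots,n\}$, let $N(T)$ be the number of ordered $n$-tuples of distinct points of $\Pi$ in which every triple of $T$ is collinear; a point-tuple is an $n$-arc precisely when it avoids every ``bad'' collinearity, so
\[
C_n(\Pi)=\sum_{T}(-1)^{|T|}N(T).
\]
The quantity $N(T)$ depends only on the linear space $g=\langle T\rangle$ generated by forcing the triples of $T$ collinear, so grouping by $g$ gives $C_n(\Pi)=\sum_{g}c_g\,W_g(\Pi)$, where $W_g(\Pi)$ counts the tuples realizing \emph{at least} the collinearities of $g$ and $c_g=\sum_{\langle T\rangle=g}(-1)^{|T|}$. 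The whole problem thus reduces to (i) computing the weak counts $W_g$ and (ii) pinning down the signs $c_g$.

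First I would establish the key dichotomy for $W_g$. Placing the points $s(1),\dots,s(n)$ one at a time, a point required to lie on two already-determined lines is pinned to their unique intersection by condition (2) of Definition \ref{projectiveplane}, a point required to lie on a single line has $q+1$ positions minus the forbidden ones, and an unconstrained point has $q^2+q+1$ positions minus the forbidden ones. Whenever $g$ admits an ordering of its points in which no point is ever forced onto three or more predetermined lines, this procedure expresses $W_g$ as a polynomial in $q$ that invokes only the order-$q$ axioms and is therefore identical in every plane of order $q$. The obstructions to such an ordering are exactly the superfigurations, and by the classification recalled in the excerpt the only superfigurations on at most $8$ points are the Fano plane ($7_3$) and the M\"obius--Kantor configuration ($8_3$). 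Hence $W_g$ is a universal polynomial unless $g$ contains one of these two configurations.

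Next I would extract the plane-dependent contributions and their signs. Because any two distinct full lines of a configuration share at most one point, the linear-space closure of a proper subset of its triples never reconstructs the missing lines, so the only $T$ with $\langle T\rangle$ equal to such a configuration is the set of all its lines; thus $c_g=(-1)^{m}$ when $g$ is a configuration with $m$ full lines, every discrepancy between $W_g$ and the strong-realization count being itself polynomial and absorbed into the polynomial part. For $n=7$ this yields the sign $(-1)^7=-1$ on $A_7(\Pi)$, while for $n=8$ the M\"obius--Kantor configuration yields $(-1)^8=+1$ on $A_8(\Pi)$. The remaining plane-dependent terms for $n=8$ come from linear spaces carrying a Fano plane on some $7$ of the $8$ points: there are $\binom{8}{7}=8$ choices of the Fano septet, the eighth point is then placed by the polynomial bookkeeping of step two (an inclusion--exclusion forbidding spurious incidences with the Fano points), and assembling these with their signs produces the coefficient $-8(q^2-20q+78)$ on $A_7(\Pi)$.

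Finally I would sum the universal-polynomial contributions $c_g W_g$ over all placeable $g$ and simplify, recovering the stated polynomial parts of the two formulas. I expect the main obstacle to be precisely this aggregation: the enumeration of placeable linear spaces on $8$ points is large, and verifying that their signed weak counts collapse to the displayed polynomials --- and that the Fano-on-seven contributions sum to exactly $8(q^2-20q+78)A_7(\Pi)$ with the correct sign, with no surviving cross-term between the Fano and M\"obius--Kantor parts --- is where errors are most likely. As consistency checks I would specialize to $\PP^2(\F_q)$ and to small orders $q$, and confirm that the superfiguration-free cases reproduce the polynomial formulas for $C_1(\Pi),\dots,C_6(\Pi)$.
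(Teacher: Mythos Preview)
Your plan is sound and would succeed, but it is organized differently from the method the paper describes (the result itself is cited from Glynn; the paper's Section~2 reproduces Glynn's algorithm as the machinery behind such formulas). The paper works recursively on the poset of linear space functions: it defines $B_f(\Pi)=\sum_{g\ge f}A_g(\Pi)$, computes $B_f$ for any $f$ possessing a point of index at most $2$ by the point-placement argument you describe (Lemma~\ref{nq calc lemma}), and then recovers each $A_f$ via $A_f=B_f-\sum_{g>f}A_g$. Your route instead performs a single global inclusion--exclusion over collinear triples, groups terms by the generated linear space, and reads off the superfiguration coefficients directly as the signed counts $c_g=\sum_{\langle T\rangle=g}(-1)^{|T|}$. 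Both arguments rest on the same structural facts---that superfigurations are exactly the linear spaces with no point of index $\le 2$, and that there are only two of them on at most eight points---and both require the same underlying weak-realization computations; what your formulation buys is an immediate combinatorial explanation of the constants $-1$ and $+1$ on $A_7$ and $A_8$ (since the Fano and M\"obius--Kantor lines pairwise meet in at most one point, the only $T$ generating them is the full line set), whereas the recursive approach produces these constants only at the end of the inversion.

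One place where your write-up is looser than it needs to be is the extraction of the $A_7$ coefficient for $n=8$. The phrase ``$\binom{8}{7}=8$ choices of the Fano septet, the eighth point then placed by polynomial bookkeeping'' suggests a single sum over eight terms, but in your own framework the $A_7$ contribution arises from \emph{every} labeled linear space $g$ on eight points whose restriction to some seven points is a Fano labeling, weighted by $c_g$ and by the $A_7$-part of $W_g$; there are many such $g$ (Fano on a septet plus an eighth point subject to zero, one, or several additional incidences), and it is the signed aggregation over all of them that yields $-8(q^2-20q+78)$. This is exactly what the paper's recursive step (4) of Algorithm~\ref{glynnsalg} automates. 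Your acknowledgement that this aggregation is ``where errors are most likely'' is apt; carrying it out carefully---or simply invoking Lemma~\ref{nq calc lemma} recursively as the paper does---completes the argument.
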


\begin{rmk}
Glynn gives formulas counting $7$-arcs and $8$-arcs in terms of realizations of ``variables'' $(7_3)$ and $(8_3)$ \cite{glynn1988} (see Remark \ref{notationremark}). Our formulas for $A_7(\Pi)$ and $A_8(\Pi)$ differ from these expressions by a factor of $n!$. That is, $A_7(\Pi) = 7!(7_3)$ and $A_8(\Pi) = 8!(8_3)$.  
\end{rmk}

Glynn applies these formulas to the problem of classifying finite projective planes.  Since $A_7(\Pi) \ge 0$ we see that $C_7(\Pi)$ is at most the value of the degree $14$ polynomial given in \emph{(1)}.  For $q = 6$, this degree $14$ polynomial evaluates to $-29257200$, which implies that there is no projective plane of order $6$ \cite[Corollary 4.3]{glynn1988}.  It is not clear whether formulas for $C_n(\Pi)$ for larger $n$ will have similar consequences for the classification of finite projective planes.

Theorem \ref{78arcs} shows that the quantity $C_7(\Pi) + A_7(\Pi)$ depends only on $q$ and not on the particular projective plane $\Pi$. This allows us to compare the number of $7$-arcs among all planes of a given order. Specifically, the number $C_7(\Pi)$ is maximized for fixed order when $A_7(\Pi)$ is minimized, and vice versa.

Given $7$ ordered points in $\Pi$ that give a strong realization of the Fano plane, any of the $7!$ permutations of these points also gives a different strong realization of the Fano plane in $\Pi$.  There are $168$ choices of an ordered $4$-arc from this set of points, so each Fano subplane contributes at least $168$ to $C_4(\Pi)$.  Thus, $A_7(\Pi)/7! \leq C_4(\Pi)/168$, and therefore $A_7(\Pi) \leq 30 C_4(\Pi)$.

If a plane satisfies the property that every 4-arc together with the three meeting points of its diagonals forms a Fano plane, then that plane is said to obey the \emph{Fano axiom}. Thus, a plane that obeys the Fano axiom has $A_7(\Pi) = 30C_4(\Pi)$ exactly; otherwise, $A_7(\Pi) < 30C_4(\Pi)$. Gleason's theorem shows that the only finite planes that obey the Fano axiom are Desarguesian planes of even order \cite{gleason}. Thus, for any $q = 2^r$, the Desarguesian plane has strictly the fewest $7$-arcs among planes of order $q$.

A conjecture widely attributed to Hanna Neumann states that every non-Desarguesian finite projective plane contains a Fano subplane (see, for example, \cite{tait}). Further, it is well-known that Desarguesian planes of odd order do not contain Fano subplanes. Thus, Theorem \ref{78arcs} leads to the following equivalent reformulations of Neumann's conjecture.

\begin{conj} \label{neumann1}
\begin{enumerate}[ {(}1{)} ]
\item Let $\Pi$ be a finite non-Desarguesian plane. Then 
\[
C_7(\Pi) < (q^2 + q + 1)(q^2 + q)q^2 (q-1)^2 (q - 3)(q - 5) (q^4 - 20q^3 + 148q^2 - 468q + 498).
\]

\item Let $q$ be a fixed odd prime power and let $\Pi$ be a non-Desarguesian projective plane of order $q$.  Then $C_7(\Pi) < C_7(q)$.
\end{enumerate}
\end{conj}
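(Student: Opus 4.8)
The plan is to show that statements (1) and (2) are each equivalent to Neumann's conjecture (the latter in its restriction to odd prime power orders), using only Theorem~\ref{78arcs}(1) together with the two facts about Fano subplanes already recalled above. No new ingredient is needed: the argument is a translation between the geometric language of Fano subplanes and the enumerative language of $A_7$ and $C_7$.

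First I would record the key dictionary: a projective plane $\Pi$ contains a Fano subplane if and only if $A_7(\Pi) > 0$. This is immediate, since a Fano subplane is exactly the image of a strong realization of the Fano plane, and $A_7(\Pi)$ counts precisely the ordered point-tuples arising from such realizations; hence $A_7(\Pi) = 0$ is equivalent to the nonexistence of a Fano subplane. Writing $P(q)$ for the degree $14$ polynomial on the right-hand side of Theorem~\ref{78arcs}(1), that theorem reads $C_7(\Pi) = P(q) - A_7(\Pi)$ for every plane $\Pi$ of order $q$, so $A_7(\Pi) > 0$ is equivalent to $C_7(\Pi) < P(q)$. Applying this to an arbitrary non-Desarguesian plane $\Pi$ yields statement (1): Neumann's conjecture holds if and only if $C_7(\Pi) < P(q)$ for every non-Desarguesian $\Pi$.

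For statement (2) I would fix an odd prime power $q$ and invoke the classical fact, already cited above, that the Desarguesian plane $\PPFq$ of odd order contains no Fano subplane. By the dictionary this gives $A_7(\PPFq) = 0$, and hence $C_7(q) = P(q)$ by Theorem~\ref{78arcs}(1). Substituting this identity into the equivalence of the previous paragraph, a non-Desarguesian plane $\Pi$ of order $q$ contains a Fano subplane if and only if $C_7(\Pi) < P(q) = C_7(q)$; so Neumann's conjecture, restricted to odd prime power orders, is exactly statement (2).

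The argument is formal once the dictionary is in place, so there is no serious obstacle; the one point demanding care is the identity $C_7(q) = P(q)$ used for statement (2), which is what allows the polynomial bound of (1) to be rewritten as the intrinsic count $C_7(q)$. This step relies essentially on the odd-order hypothesis: by Gleason's theorem (recalled above) the Desarguesian plane of \emph{even} order obeys the Fano axiom, so there $A_7$ is maximized and $C_7$ minimized, and the inequality in (2) would reverse. Thus statement (2) is genuinely an odd-order phenomenon, and I would be careful to state it only in that range.
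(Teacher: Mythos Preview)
Your proposal is correct and matches the paper's own justification. Note that this statement is a \emph{conjecture}, not a theorem: the paper does not prove it, but rather presents it as an equivalent reformulation of Neumann's conjecture, with the equivalence argued in the paragraphs immediately preceding it via exactly the dictionary $A_7(\Pi)>0 \Leftrightarrow C_7(\Pi)<P(q)$ and the fact that $A_7(q)=0$ for odd $q$ that you spell out.
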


When $\Pi= \PPFq$, we can compute $A_7(q)$ and $A_8(q)$ to get formulas for $C_7(q)$ and $C_8(q)$. We have seen that $A_7(q) = 0$ for all odd $q$, and is given by a polynomial in $q$ when $q$ is even.
\begin{defn}
A \emph{quasipolynomial} of period $m$ is a function $g(x)$ of the positive integers such that there is a collection of polynomials $f_0(x),\ldots, f_{m-1}(x)$ satisfying $g(x) = f_i(x)$ for all $x \equiv i \pmod{m}$.
\end{defn}
Such functions are sometimes called \emph{PORC}, or \emph{polynomial on residue classes}. If the formula $A_7(q)$ for prime powers $q$ is extended to all integers we obtain a quasipolynomial of period 2, which only depends upon the parity of $q$. 
Similarly, $A_8(q)$ is a quasipolynomial of period $3$.  Glynn computes these quasipolynomials and gives explicit quasipolynomial formulas for $C_7(q)$ and $C_8(q)$ \cite{glynn1988}. He did not push this method further, noting ``the complexity of the problem as the number of points approaches $10$.''

In order to study the problem of counting inequivalent linear MDS codes, \ISSS give a formula for the number of ordered $n$-arcs in $\PPFq$.  For more on the connection between arcs and MDS codes, see \cite{hirschfeldthas}. There are $10$ superfigurations on $9$ points up to isomorphism, which we denote by $9_3,\ldots, 9_{12}$.  Let $A_{9_i}(\Pi)$ denote the number of strong realizations of $9_i$ in the projective plane $\Pi$.  When $\Pi = \PPFq$ we write $A_{9_i}(q)$ instead of $A_{9_i}(\Pi)$.
\begin{thm}\cite[Theorem 1]{iss}\label{ISSThm}
The number of $9$-arcs in the projective plane over the finite field $\Fq$ is given by
\begin{eqnarray*}
C_9(q) &=& (q^2 + q + 1)(q^2 + q)(q^2)(q-1)^2 \\
& & \bigg[q^{10} - 75q^9 + 2530q^8 - 50466q^7 + 657739q^6 - 5835825q^5 \\
& & +35563770q^4 - 146288034q^3 + 386490120q^2 - 588513120q \\
& & + 389442480  - 1080\big(q^4 - 47q^3 + 807q^2 - 5921q + 15134\big)a(q) \\ 
& &       + 840\big(9q^2 -243q + 1684\big)b(q) + 30240\big(-9c(q) + 9 d(q)+ 2e(q)\big)\bigg]
\end{eqnarray*}
where 
\begin{eqnarray*}
a(q) & = & \begin{cases}  
1 & \text{ if } 2 \mid q, \\
0 &  \text{ otherwise};
\end{cases} \\
b(q) & = &  \#\{x \in \Fq: x^2 + x + 1 = 0\}, \\
c(q) & = & \begin{cases} 
1 & \text{ if } 3 \mid q, \\ 
0 & \text{ otherwise};
\end{cases}\\
d(q) & = & \#\{x \in \Fq: x^2 + x - 1 = 0\}, \\
e(q) & = & \#\{x \in \Fq: x^2 + 1 = 0\}. \\
\end{eqnarray*}
\end{thm}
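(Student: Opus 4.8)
The plan is to obtain the formula in two stages: first produce a \emph{universal} identity, valid for every projective plane $\Pi$ of order $q$, expressing $C_9(\Pi)$ as an explicit polynomial in $q$ plus correction terms that record strong realizations of the finitely many superfigurations on at most nine points; and then specialize to $\Pi = \PPFq$ by computing each of those realization counts. The target of the first stage is an identity of the shape
\[
C_9(\Pi) = P(q) + \alpha(q)\,A_7(\Pi) + \beta(q)\,A_8(\Pi) + \sum_{i=3}^{12} \gamma_i(q)\,A_{9_i}(\Pi),
\]
with $P,\alpha,\beta,\gamma_i$ explicit polynomials in $q$. The presence of $A_7$ and $A_8$ is forced by inclusion--exclusion: among nine points a subset of seven or eight may itself carry the collinearities of the Fano plane or the M\"obius--Kantor configuration, and those patterns must be discounted exactly as in Theorem \ref{78arcs}.

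To produce this identity I would run Glynn's algorithm, which amounts to a M\"obius inversion over the poset of linear spaces on the point set $\{1,\dots,9\}$ ordered by refinement of collinearity. Counting all ordered $9$-tuples of distinct points is elementary, and from this one subtracts, with signs, the tuples that are \emph{weak} realizations of each linear space, that is, tuples whose image satisfies at least the prescribed collinearities but possibly more. The structural fact that keeps the sum tractable is that the only linear spaces contributing corrections beyond the generic polynomial are the superfigurations: a point lying on fewer than three full lines can be absorbed by the same frame-choosing arguments that give $C_4$ through $C_6$. The coefficients $\gamma_i(q)$ then record how the forced collinearities propagate, while $\alpha$ and $\beta$ count how often a $9$-point superfiguration induces a Fano or M\"obius--Kantor subpattern. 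Carrying out this bookkeeping correctly, and in a form that a machine can verify, is the first substantial task.

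The second stage is to evaluate $A_7(q)$, $A_8(q)$, and each $A_{9_i}(q)$ in the Desarguesian plane. The first two are already discussed above, with $A_7(q)$ a polynomial times the parity indicator $a(q)$. For the ten superfigurations $9_3,\dots,9_{12}$ the method is uniform: normalize four of the nine points to a standard frame using the sharply transitive action of $\PGL_3(\Fq)$ on ordered $4$-arcs, solve the incidence equations cutting out the remaining points, and retain only the solutions that avoid all \emph{extra} collinearities, so as to count strong rather than merely weak realizations. For the combinatorially rigid superfigurations this count is a single polynomial in $q$; for several others the incidence equations force an auxiliary quadratic to have a root in $\Fq$, and the number of realizations jumps according to whether that quadratic splits. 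These jumps are exactly the quantities $b(q)$, $d(q)$, and $e(q)$, counting primitive cube roots of unity, roots of $x^2+x-1$, and square roots of $-1$, while the characteristic indicators $a(q)$ and $c(q)$ enter through the Fano-type and Hesse-type configurations that degenerate in characteristics $2$ and $3$. Assembling the resulting quasipolynomials and substituting into the universal identity yields the stated formula.

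The main obstacle is the case analysis for the ten $9$-point superfigurations. Each requires correctly identifying which arithmetic condition on $q$ governs its realizability, counting the solutions of its incidence equations without over- or under-counting the tuples that accidentally acquire additional collinearities, and tracking both the order of its automorphism group and the $\PGL_3(\Fq)$-stabilizer so that the passage between labeled and ordered realizations is exact. A single sign error or miscounted stabilizer anywhere in this catalogue corrupts the final polynomial. I would therefore treat the explicit recovery of \ISS's formula as the principal consistency check: the universal identity and the Desarguesian realization counts are logically independent computations, and their agreement on $\PPFq$ simultaneously validates the implementation of Glynn's algorithm and the individual superfiguration counts.
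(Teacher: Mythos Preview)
Your proposal is correct and follows essentially the same route as the paper: derive the universal identity for $C_9(\Pi)$ via Glynn's algorithm (M\"obius inversion over linear space functions, with superfigurations as the irreducible correction terms), then specialize to $\PPFq$ by computing each $A_s(q)$ using a $\PGL_3$-normalized frame and reading off the arithmetic obstructions $a(q),\dots,e(q)$. The only cosmetic difference is that the paper (following \cite{iss}) normalizes five points rather than four, exploiting a collinearity already present in each superfiguration to pin down one more coordinate; this shortens the case analysis but does not change the method.
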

\ISSS give quasipolynomial formulas for each of the functions $a(q), b(q), c(q), d(q), e(q)$ \cite{iss}. For example, $e(q)$ depends only on $q$ modulo $4$.  Substituting these formulas into Theorem \ref{ISSThm} gives $C_9(q)$ as a quasipolynomial of period $60$.

In order to prove Theorem \ref{ISSThm}, \ISSS use the fact that there is a natural way to assign coordinates to the points of $\PPFq$.  For example, by taking an appropriate change of coordinates, every superfiguration on at most $9$ points is projectively equivalent to one where five points are chosen to be $[1:0:0],\ [0:1:0],\ [0:0:1],\ [1:1:1]$, and $[1:1:0]$.  Therefore, we need only count the number of strong realizations of the superfiguration where these five points are fixed (see \cite{iss, sturmfels}).  

We cannot generally assign coordinates to the points of a non-Desarguesian projective plane. In particular, Theorem \ref{ISSThm} is insufficient to describe the number of $9$-arcs in non-Desarguesian planes, that is, planes that are not coordinatized by a field.

The main result of this paper is to extend Theorem \ref{ISSThm} to any projective plane of order $q$.
\begin{thm}\label{9arcs}
Let $\Pi$ be a projective plane of order $q$.  The number of $9$-arcs in $\Pi$ is
\begin{eqnarray*}
C_9(\Pi) &= & q^{18} - 75q^{17} + 2529q^{16} - 50392q^{15} + 655284q^{14} \\
& & - 5787888q^{13} + 34956422q^{12} - 141107418q^{11} + 356715069q^{10} \\
& &-477084077q^9 + 143263449q^8 + 237536370q^7 + 52873326q^6 \\
& &- 2811240q^5 - 588466080q^4 + 389304720 q^3 \\
& & + (-36q^4 + 1692q^3 - 29052q^2 + 212148q - 539784) A_{7}(\Pi) \\ 
& & + (9q^2 - 243q + 1647) A_{8}(\Pi) - A_{9_{3}}(\Pi) - A_{9_{4}}(\Pi) \\
& & - A_{9_{5}}(\Pi) + A_{9_{7}}(\Pi) - 3 A_{9_{10}}(\Pi) + 3 A_{9_{11}}(\Pi) - 9 A_{9_{12}}(\Pi). \\
\end{eqnarray*}
\end{thm}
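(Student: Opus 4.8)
The plan is to count ordered $9$-tuples of distinct points of $\Pi$ and sieve out those that fail to be arcs. Writing $[ijk]$ for the event that the points in positions $i,j,k$ are collinear, a $9$-arc is exactly a $9$-tuple of distinct points on which no $[ijk]$ holds, so inclusion--exclusion over collinear triples gives
\[
C_9(\Pi) \;=\; \sum_{G} (-1)^{|G|}\, N_G(\Pi),
\]
where $G$ ranges over sets of triples from $\{1,\dots,9\}$ and $N_G(\Pi)$ is the number of ordered $9$-tuples of distinct points realizing at least every collinearity prescribed by $G$. Since the triples in $G$ generate further forced collinearities, they determine a linear space $S$ on $9$ points, and I would regroup the sum according to $S$, turning it into a sum of realization counts weighted by Möbius-type coefficients that record, with signs, how many triple-sets generate each $S$.

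The decisive structural point is that $N_S(\Pi)$ is a \emph{universal} polynomial in $q$ — the same in every plane of order $q$ — whenever $S$ has no overdetermined core, meaning no subset of its points each lie on at least three full lines; in that case every point can be positioned with its expected number of degrees of freedom, and I would show that all such contributions aggregate into the degree-$18$ polynomial of the theorem, whose leading term $q^{18}$ reflects the $\sim (q^2)^9$ ways of choosing nine points. The remaining linear spaces are precisely those containing a superfiguration; there the incidences are overdetermined and the number of realizations genuinely depends on $\Pi$ (for instance, the Fano plane is realizable only in even order), so these terms cannot be polynomial and must be carried by the correction quantities $A_f(\Pi)$. Establishing this dichotomy cleanly, and verifying that the only superfigurations on at most nine points are the Fano plane, the M\"obius--Kantor configuration, and $9_3,\dots,9_{12}$, is the step I expect to be the main obstacle, since it is what guarantees that no stray linear space leaves a non-polynomial residue.

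Granting the split, the remaining task is to compute the coefficient of each $A_f(\Pi)$, which is where I would run the implementation of Glynn's algorithm. For the ten superfigurations on exactly nine points every position is consumed, so each coefficient is a signed integer determined by the incidence structure of $f$ (a local Möbius-function computation); this is what produces the weights $-1,-1,-1,+1,-3,+3,-9$ on $9_3,9_4,9_5,9_7,9_{10},9_{11},9_{12}$ and the vanishing of the contributions of $9_6,9_8,9_9$. For the Fano plane and the M\"obius--Kantor configuration, which use only $m=7$ and $m=8$ of the nine points, the remaining $9-m$ points must still be placed subject to their own collinearity constraints; summing over these placements with a secondary inclusion--exclusion (which also effects the passage from the weak counts $N_S$ to the strong-realization quantities $A_f(\Pi)$) yields a polynomial coefficient of degree $2(9-m)$, giving the degree-$4$ coefficient of $A_7(\Pi)$ and the degree-$2$ coefficient of $A_8(\Pi)$ recorded in the statement. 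The bookkeeping is finite but delicate, and getting every coefficient and sign right by hand would be the principal difficulty; the computer implementation is what makes it tractable.

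Finally, I would validate the formula by specializing to $\Pi=\PPFq$, where coordinates are available. Computing each $A_{9_i}(q)$ explicitly, together with $A_7(q)$ and $A_8(q)$ — which vanish or are polynomial according to the parity and residue conditions already recorded for Theorem \ref{78arcs} — and substituting into the formula should collapse it to the quasipolynomial of Theorem \ref{ISSThm}. This both serves as a strong consistency check on the machine-computed coefficients and recovers \ISS's formula as the promised special case.
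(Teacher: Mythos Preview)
Your plan coincides with the paper's: both amount to running Glynn's recursive procedure (the paper's Algorithm \ref{glynnsalg}) on a computer and reading off the coefficients, then validating against Theorem \ref{ISSThm} by specializing to $\PPFq$. The paper frames the recursion via the poset of linear space functions and the relation $A_f = B_f - \sum_{g>f} A_g$, while you enter through inclusion--exclusion over collinear triples and regroup by the generated linear space; these are equivalent packagings of the same sieve, and you explicitly invoke Glynn's implementation for the actual coefficients, just as the paper does.

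One caveat on your exposition: the ``decisive structural point''---that $N_S(\Pi)$ is a universal polynomial in $q$ precisely when $S$ has no overdetermined core---is not how the paper isolates the superfiguration terms, and it is not obvious as stated. Lemma \ref{nq calc lemma} expresses $B_f(\Pi)$ for a non-superfiguration $f$ in terms of the values $A_g(\Pi)$ on one fewer point, but $g$ there ranges over \emph{all} linear space functions $g \ge f'$, and some of these may be superfigurations even when $f$ itself contains none as a substructure. So a clean dichotomy at the level of each individual $N_S$ would itself require proof; Theorem \ref{glynn_alg_thm} only guarantees the polynomial-plus-superfigurations decomposition for the \emph{aggregate} $C_n(\Pi)$. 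Since your computation ultimately runs through Glynn's algorithm anyway, this does not break the argument, but the heuristic ``every point can be positioned with its expected number of degrees of freedom'' is not by itself a proof of the split you describe.
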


Computing $A_7(q), A_8(q)$, and $A_{9_i}(q)$ for each $i$ recovers Theorem \ref{ISSThm} as a corollary. These functions are computed in \cite{iss}, but we note that there is a minor error in the calculation of $A_{9_3}(q)$.  Despite this error, the final formula for $C_9(q)$ is correct. We include these counts for {completeness}:\\
{\renewcommand{\arraystretch}{1.2}
\resizebox{\columnwidth}{!}{
\begin{tabular}{lclll}
$A_7(q) \ = 30a(q)C_4(q)$ & & \multicolumn{3}{l}{\phantom{spacing spa}$A_{9_7}(q) \ = 60480(e(q) - a(q))C_4(q)$}\\
$A_8(q) \ = 840b(q)C_4(q)$ & &  \multicolumn{3}{l}{\phantom{spacing spa}$A_{9_8}(q) \ = 10080(q - 2 - b(q))C_4(q)$} \\
\multicolumn{3}{l}{$A_{9_3}(q) = 3360((q - 2 - b(q))(q - 5) + (q - 3)b(q))C_4(q)$} & \phantom{spa} & $A_{9_9}(q) \ = 0$ \\
$A_{9_4}(q) = 40320(q - 2 - b(q))C_4(q)$ & & & & $A_{9_{10}}(q) = 1680b(q)C_4(q)$ \\
$A_{9_5}(q) = 30240(q - 3)(1 - a(q))C_4(q)$ & & & & $A_{9_{11}}(q) = 90720d(q)C_4(q)$ \\
$A_{9_6}(q) = 30240(q - 2)a(q)C_4(q)$ & & & & $A_{9_{12}}(q) = 30240c(q)C_4(q)$.
\end{tabular}}}

There are $q^2+q+1$ points in a projective plane of order $q$, so naively counting $9$-arcs requires checking $O(q^{18})$ configurations. The formula from Theorem \ref{9arcs} reduces the problem to counting strong realizations of twelve superfigurations. Each of these superfigurations is so highly determined that at most $q^2$ sets of points must be considered following a selection of four initial points. Therefore, Theorem \ref{9arcs} reduces the total number of collections of points that we must consider to $O(q^{10})$.

Theorem \ref{9arcs} demonstrates non-obvious relationships between the function $C_9(\Pi)$ and the number of strong realizations of certain superfigurations, such as the Pappus configuration $9_3$.  Interestingly, the superfigurations $9_6, 9_8$, and $9_9$ do not influence $C_9(\Pi)$. These relationships could lead to more conjectures along the lines of Conjecture \ref{neumann1}.

In Section \ref{sectionalgorithms}, we describe the algorithm used to prove Theorem \ref{9arcs}. In Section \ref{10arcresults} we discuss computational aspects of the implementation and the difficulty of extending our results to larger arcs. Finally, in Section \ref{sec_further} we discuss related work on counting $10$-arcs in $\PPFq$.

\section{Algorithms for calculating arc formulas} \label{sectionalgorithms}

In \cite{glynn1988}, Glynn gives an inductive algorithm for counting $C_n(\Pi)$ in terms of the number of strong realizations in $\Pi$ of all superfigurations on at most $n$ points. The following statement is a version of \cite[Theorem 3.6]{glynn1988}.
\begin{thm}\label{glynn_alg_thm}
There exist polynomials $p(q), p_s(q)$ such that for any finite projective plane $\Pi$ of order $q$, we have
\begin{equation*} \label{eq:1}
C_n(\Pi) = p(q) + \sum_{s} p_s (q) A_s(\Pi),
\end{equation*}
where the sum is taken over all superfigurations $s$ with at most $n$ points up to isomorphism.
\end{thm}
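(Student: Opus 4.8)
The plan is to prove the identity by induction on the combinatorial type of the linear space induced by a tuple of points, systematically reducing every realization count to those of superfigurations. First I would record the basic organizing principle: every ordered $m$-tuple of distinct points of $\Pi$ induces a linear space on $\{1,\dots,m\}$ by recording exactly which subsets of coordinates are collinear in $\Pi$. This partitions the $\prod_{i=0}^{m-1}(q^2+q+1-i)$ ordered tuples of distinct points according to the isomorphism type of the induced linear space, and the $n$-arcs are precisely the tuples of type $a=(\{1,\dots,n\},T_n)$, so $C_n(\Pi)=A_a(\Pi)$. It therefore suffices to prove the stronger statement that for \emph{every} linear space $S$ on $m\le n$ points one has $A_S(\Pi)=p_S(q)+\sum_s p_{S,s}(q)\,A_s(\Pi)$, the sum over superfigurations on at most $m$ points with coefficients independent of $\Pi$; the theorem is the special case $S=a$.

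The engine of the induction is a reduction lemma. Since every full line of a linear space has at least three points by definition, $S$ fails to be a superfiguration exactly when some point $p$ lies on at most two full lines. I would count the strong realizations of $S$ by first choosing a strong realization of the deleted space $S-p$ on the remaining $m-1$ points (every realization of $S$ restricts to one of $S-p$) and then counting the admissible placements of $p$. The point $p$ must lie on the images of the at most two full lines of $S$ through it --- a locus that is all of $\Pi$, a single line, or a single intersection point according to whether $p$ lies on $0$, $1$, or $2$ full lines --- while avoiding the lines spanned by every pair with which $p$ is not meant to be collinear, and avoiding the other placed points. Inclusion--exclusion over these forbidden lines then yields $A_S(\Pi)=\Phi_S(q)\,A_{S-p}(\Pi)+\sum_T c_T(q)\,A_T(\Pi)$, where $\Phi_S(q)$ is the generic placement count (a polynomial depending only on the combinatorial data of $S$: the numbers of required lines, forbidden lines, and placed points incident to them), and each $T$ is a linear space on the same $m$ points but with strictly more incidences, arising when the placement of $p$ is forced to create an unintended concurrence.

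To see that this recursion terminates, I would order linear spaces lexicographically first by number of points and then by decreasing number of incidences. In the reduction, $S-p$ has fewer points, while every correction term $T$ has the same number of points but strictly more incidences; both therefore lie strictly below $S$. The base of the induction is clean because there are no superfigurations on six or fewer points, so every such $A_S(\Pi)$ collapses to a pure polynomial in $q$, and any superfiguration is already in the target form with $p_S=0$ and a single unit coefficient. Substituting the inductive expressions for $A_{S-p}$ and for the $A_T$ and collecting terms --- all coefficients being polynomials in $q$ with no dependence on $\Pi$ --- produces the desired expression for $A_S$, and specializing to $S=a$ gives the statement.

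The hard part will be the reduction lemma itself: verifying that the inclusion--exclusion bookkeeping produces a coefficient $\Phi_S(q)$ that is genuinely polynomial and depends only on the combinatorial type of $S$ rather than on the geometry of $\Pi$, and matching each correction term to a specific refinement $T$ with the correct integer coefficient. The most delicate situation is when $p$ lies on exactly two full lines, where $p$ is forced onto the single intersection point of two determined lines and one must separate the generic case from the configuration-dependent coincidences in which that intersection meets a forbidden line or an already-placed point. Establishing that every such coincidence, and every concurrence among the forbidden lines, corresponds to a refinement $T$ that \emph{strictly} increases the incidence count is precisely what makes the induction well-founded, and I expect this to be the crux of the argument.
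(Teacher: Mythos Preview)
Your plan is correct and rests on the same core idea as the paper: remove a point of index at most $2$ and reduce to linear spaces that are smaller either in point count or in the partial order. The paper packages this differently, introducing the weak-realization count $B_f(\Pi)=\sum_{g\ge f}A_g(\Pi)$ as an explicit intermediate: Lemma~\ref{nq calc lemma} computes $B_f(\Pi)$ as a $q$-polynomial combination of the $A_g(\Pi)$ for $(m-1)$-point $g$'s, and then $A_f=B_f-\sum_{g>f}A_g$ gives the second half of the recursion along the poset at level $m$. Your single-step recursion $A_S=\Phi_S(q)\,A_{S-p}-\sum_T A_T$ (with $T>S$ and $T$ restricting to $S-p$) is exactly what one gets by composing these two steps and keeping only the $g=S-p$ term of the lemma; the weak-realization language buys the paper a clean statement of $\mu(g,f)$ that is manifestly independent of the realization, which is precisely what you flag as the crux.

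One caution on your framing: doing literal inclusion--exclusion over the arrangement of forbidden lines inside a \emph{fixed} realization of $S-p$ will produce terms (e.g.\ triple concurrences of lines of $S-p$) that are not determined by the combinatorics of $S$ alone, and so cannot serve as your $\Phi_S(q)$. The argument that actually works, and which the paper's lemma encodes, is cleaner: count placements of $p$ on the required locus minus the placed points (this count $\mu_0$ \emph{is} combinatorially determined, being $q^2+q+2-m$, $q+1-|L'|$, or $1$ in the three index cases), and then observe that these placements are partitioned according to the linear space $T\ge S$ they realize. That immediately gives $\mu_0\,A_{S-p}(\Pi)=\sum_{T\ge S,\;T|_{\{1,\dots,m-1\}}=S-p}A_T(\Pi)$, hence your recursion with all $c_T=-1$ and no inclusion--exclusion needed.
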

We call the polynomial $g_s(q)$ the \emph{coefficient of influence} of the superfiguration $s$, since it measures the degree to which $s$ is relevant in $C_n(\Pi)$.

In order to prove Theorem \ref{9arcs} we return to Glynn's original algorithm from \cite{glynn1988}.  To our knowledge, this is the first time that the algorithm has been implemented to find new enumerative formulas, rather than just used as a theoretical tool to prove that formulas of a certain type exist.  The implementation we describe has the potential to give analogues of Theorem \ref{9arcs}, computing $C_n(\Pi)$ for larger values of $n$.

The algorithm used to arrive at Theorems 1.3 and 1.4 was first described in \cite{glynn1988}, and was further clarified by Rolland and Skorobogatov in \cite{rolland1993}. The latter form of the algorithm was a central component of the proof of Theorem \ref{ISSThm}. We present an exposition of the algorithm and prove that it works, following \cite{iss} and \cite{rolland1993}. We then discuss modifications to the algorithm that lead to a manageable runtime.

\begin{defn}[\cite{iss}]
A \textit{boolean n-function} is a function taking subsets of $\{1,2,3,\ldots,n\}$ to \{0,1\}. Two boolean $n$-functions $f$ and $g$ are \textit{isomorphic} if there is a permutation $i$ of $\{1,\ldots,n\}$ so that $g = f \circ i$.
\end{defn}

We highlight that $f$ can be thought of as \textit{labeled}: for instance, we must distinguish the boolean 2-function which just sends \{1\} to 1 from the isomorphic boolean 2-function which just sends \{2\} to 1. We also note that the boolean $n$-functions are in one-to-one correspondence with the power set of the power set of $\{1,\ldots,n\}$.

\begin{defn}[\cite{iss}]
For boolean $n$-functions $f$ and $g$, we say $f \geq g$ if $f(S) \geq g(S)$ for all $S \subseteq \{1,2,\ldots,n\}$. Note that $\geq$ is a partial order on the set of boolean $n$-functions.
\end{defn}

\begin{example}
Suppose $f\colon 2^{\{1,\ldots,7\}} \rightarrow \{0,1\}$ satisfies $f(\{1,3,4,5\}) = f(\{4,5,6\}) = 1$ and $f(S) = 0$ for every other subset.
Then $f$ is a boolean $7$-function. If we let $f'\colon 2^{\{1,\ldots, 7\}} \rightarrow \{0,1\}$ be the function that satisfies $f'(\{1,3,4\}) = f'(\{4,5,6\}) = 1$ and $f'(S) = 0$ for every other subset, then it is \emph{not} the case that $f \geq f'$.
\end{example}

Our goal is to explain an abstraction of the axioms of geometry where the elements of $\{1,\ldots,n\}$ are points and the function $f$ is an indicator function that evaluates to $1$ for the collinear subsets of $\{1,\ldots,n\}$.

\begin{defn}
A boolean $n$-function $f$ is a \textit{linear space function} if it satisfies the following:
\begin{enumerate}[ {(}1{)} ]
\item If $f(I) = 1,$ then $f(J) = 1$ for all $J \subseteq I$. \\

\item If $\#I \leq 2,$ then $f(I) = 1$. \\

\item If $f(I) = f(J) = 1$ and $\# (I \cap J) \geq 2$, then $f(I \cup J) = 1$.
\end{enumerate}
If $f$ does not satisfy these requirements, we call $f$ \textit{pathological.}
\end{defn}

\noindent In words, these requirements mean the following: 
\begin{itemize}
\item subsets of collinear sets are collinear sets; 
\item any set of $0, 1, $ or $2$ points qualifies as a collinear set; 
\item if two collinear sets intersect in at least two points, then the union of the sets is also a collinear set. 
\end{itemize}
Thus, a linear space function $f$ defines a linear space with points $\{1,2,\ldots,n\}$ and sets of collinear points defined by $f^{-1}(1)$. If $f$ defines a superfiguration, we sometimes say that $f$ is a superfiguration.

For the rest of the section, fix some projective plane $\Pi$ of order $q$. We now extend Definition \ref{strongreal1} to boolean $n$-functions.

\begin{defn} \label{strongreal} 
Let $f$ be a boolean $n$-function.  A \emph{strong realization} of $f$ is an $n$-tuple $S$ of distinct points labeled $1,2,\ldots, n$ in $\Pi$ such that the set $f^{-1}(1)$ is exactly the collection of collinear subsets of $S$. We denote the number of strong realizations of $f$ in $\Pi$ by $A_f(\Pi)$.
\end{defn}

If $f$ is pathological, then $A_f(\Pi) = 0$ by the projective plane axioms given in Definition \ref{projectiveplane}. If $f$ is a linear space function, then $f$ defines a linear space $s_f$ on the set $\{1,2,...,n\}$, where the lines are taken to be the collinear sets of $f$ which are maximal with respect to inclusion. In this situation, the value of $A_{s_f}(\Pi)$ is equal to $A_f(\Pi)$ multiplied by the number of linear space functions isomorphic to $f$.

\begin{defn} \label{weakreal}
Let $f$ be a boolean $n$-function.  A \emph{weak realization} of $f$ is an $n$-tuple $S$ of distinct points labeled $1,2,\ldots, n$ in $\Pi$ such that the set $f^{-1}(1)$ is a subset of the collection of collinear subsets of $S$.
\end{defn}

Consider the boolean $n$-function $a$ for which $a^{-1}(1)$ is the set of subsets of $\{1,\ldots,n\}$ of size 0, 1, or 2. Then $a$ is also a linear space function. Every tuple of $n$ distinct points in $\Pi$ is a weak realization of $a.$ A strong realization of $a$ is an $n$-arc. Therefore, the goal of the algorithm is to calculate $A_a(\Pi)$. We do this indirectly, by examining weak realizations and working backwards.

\begin{defn} For a boolean $n$-function $f$, define
$$B_f(\Pi) = \sum_{g \geq f} A_g(\Pi).$$
If $f$ is a linear space function, then $B_f(\Pi)$ is the number of weak realizations of $f$ in $\Pi$. If $f$ is pathological, then $B_f(\Pi)$ is still defined, although its interpretation in terms of weak realizations is less clear.
\end{defn}

Now we reproduce the method described by Rolland and Skorobogatov in \cite{rolland1993} to calculate $B_f(\Pi)$ in terms of realizations of linear space functions on fewer points.

\begin{defn}
Suppose that $f$ is a linear space function. A \textit{full line} of $f$ is a subset $S \subseteq \{1,\ldots,n\}$ with $\#S \ge 3$, so that $f(S) = 1$ and for all $T$ that properly contain $S,\ f(T) = 0$. In other words, there is no larger set of collinear points that includes $S$. We say the \textit{index} of a point $p$ of $f$ is the number of full lines of $f$ that include $p$.
\end{defn}
\noindent Note that we can completely describe a linear space function by giving its full lines.

The next lemma is closely related to a result of Glynn \cite[Lemma 3.14]{glynn1988}.
\begin{lemma} \label{nq calc lemma}
Suppose the linear space function $f$ on $n$ points has a point of index 0, 1, or 2. Then we may express $B_f(\Pi)$ as a polynomial in $q$ and the values $A_g(\Pi)$, where $g$ ranges over the linear space functions on $n - 1$ points. Further, $B_f(\Pi)$ is linear in the $A_g(\Pi)$.
\end{lemma}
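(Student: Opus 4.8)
The plan is to single out the point $p$ of index $0$, $1$, or $2$, relabel it as the point $n$, and let $f'$ denote the linear space function on $\{1,\ldots,n-1\}$ obtained by restricting $f$. The key observation is that every weak realization of $f$ restricts to a weak realization of $f'$ on its first $n-1$ points, since the constraints imposed by $f'$ are exactly those constraints of $f$ that do not involve $n$. Conversely, a weak realization $R = (P_1,\ldots,P_{n-1})$ of $f'$ extends to a weak realization of $f$ precisely by adjoining a point $P_n$, distinct from $P_1,\ldots,P_{n-1}$, that satisfies the collinearity constraints of $f$ involving $n$; these are governed by the full lines of $f$ through $p$, of which there are at most two. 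Writing $N(R)$ for the number of admissible choices of $P_n$, we have $B_f(\Pi) = \sum_R N(R)$, where $R$ runs over weak realizations of $f'$, so the task reduces to evaluating this sum.

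When $p$ has index $0$ there are no constraints on $P_n$ beyond distinctness, so $N(R) = q^2 + q + 1 - (n-1)$ is constant and $B_f(\Pi) = (q^2 + q + 2 - n)\, B_{f'}(\Pi)$. When $p$ has index $1$, its unique full line $L$ forces $P_n$ onto the line $\ell$ determined by the (at least two) points of $L \setminus \{p\}$, giving $N(R) = (q+1) - \#\{j \le n-1 : P_j \in \ell\}$. I would then expand $\sum_R N(R)$ by linearity: the leading term contributes $(q+1)\,B_{f'}(\Pi)$, while for each $j$ the term counting realizations with $P_j \in \ell$ is itself a weak-realization count $B_{h_j}(\Pi)$, where $h_j$ is the linear space function on $n-1$ points obtained from $f'$ by adjoining the collinearity of $(L \setminus \{p\}) \cup \{j\}$ and closing under the linear space axioms. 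Since $B_{h_j}(\Pi) = \sum_{g \ge h_j} A_g(\Pi)$ is a sum of values $A_g(\Pi)$ on $n-1$ points, $B_f(\Pi)$ emerges in the claimed form.

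The index-$2$ case is where I expect the real work to lie. Here $p$ lies on two full lines $L_1, L_2$, which in a weak realization determine lines $\ell_1, \ell_2$ on which $P_n$ must simultaneously lie. I would stratify the weak realizations of $f'$ according to whether $\ell_1 = \ell_2$ or $\ell_1 \ne \ell_2$. The degenerate stratum $\ell_1 = \ell_2$ is counted by $B_h(\Pi)$ for the function $h$ adjoining the collinearity of $(L_1 \cup L_2) \setminus \{p\}$, and on it $P_n$ ranges over a single line as in the index-$1$ analysis. On the generic stratum $P_n$ is forced to equal the unique intersection point $\ell_1 \cap \ell_2$, so $N(R) \in \{0,1\}$ according to whether this point already appears among $P_1,\ldots,P_{n-1}$; summing the indicator and subtracting, via inclusion--exclusion, the contributions in which the intersection point coincides with some $P_j$ again yields a signed combination of weak-realization counts $B_h(\Pi)$ for linear space functions $h$ on $n-1$ points built by adjoining suitable collinearities.

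The main obstacle throughout is the bookkeeping in this last case: verifying that each adjoined-collinearity function $h$ is a well-defined boolean $(n-1)$-function whose weak realizations are exactly the stratum being counted, and that the various coincidences are separated so that no realization is double-counted. Granting this, every resulting term has the form $(\text{polynomial in } q)\cdot B_h(\Pi)$, and substituting $B_h(\Pi) = \sum_{g \ge h} A_g(\Pi)$ expresses $B_f(\Pi)$ as a polynomial in $q$ that is linear in the values $A_g(\Pi)$ with $g$ a linear space function on $n-1$ points, as required.
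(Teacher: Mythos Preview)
Your outline is correct, and the key reduction---restricting to $f'$ on $\{1,\ldots,n-1\}$ and counting extensions---is exactly how the paper begins.  Where you diverge is in how the extension count is organized.  You express the number $N(R)$ of admissible choices for $P_n$ as a signed sum of indicator functions (``$P_j\in\ell$'', ``$\ell_1=\ell_2$'', ``$\ell_1\cap\ell_2=P_j$''), sum over all weak realizations $R$ of $f'$, and interpret each resulting term as a weak-realization count $B_h(\Pi)$ for an auxiliary $h$ obtained by adjoining collinearities to $f'$; only at the end do you substitute $B_h(\Pi)=\sum_{g\ge h}A_g(\Pi)$.  The paper instead observes that $N(R)$ depends only on the \emph{strong} type $g\ge f'$ of $R$: whether a given $P_j$ lies on $\ell$, whether $\ell_1=\ell_2$, and whether $\ell_1\cap\ell_2$ coincides with some $P_j$ are all encoded combinatorially in $g$.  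This yields directly
\[
B_f(\Pi)=\sum_{g\ge f'}\mu(g,f)\,A_g(\Pi),
\]
with $\mu(g,f)$ read off from $g$ in one step---no auxiliary $h$'s, no inclusion--exclusion, and no ``bookkeeping obstacle'' in the index-$2$ case.

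Both routes prove the lemma.  Yours has the mild advantage that the auxiliary $B_h$'s are concrete objects one can write down without enumerating all $g\ge f'$, which may be appealing for hand computations.  The paper's route is cleaner as a proof and, more importantly for the algorithmic application, produces the coefficient of each $A_g(\Pi)$ directly rather than as a signed sum over several $h\le g$.
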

\begin{proof}
Let $f$ be a linear space function with points $\{1,2,\ldots, n\}$ and suppose that the point $n$  has index $0, 1$, or $2$.  Let $f'$ be the linear space function on the points $\{1,2,\ldots, n-1\}$ inheriting collinearity data from $f$.  That is, a subset $S' \subset \{1,2,\ldots, n-1\}$ has $f'(S) = 1$ if and only if there is a subset $S \subset \{1,2,\ldots,n\}$ with $S' \subseteq S$ and $f(S) = 1$.

Every weak realization of $f$ in $\Pi$ is a strong realization of a linear space function $g \ge f'$ on $\{1,2,\ldots,n-1\}$ together with an additional point of $\Pi$, which we label $n$.  For each $g$, let $\mu(g,f)$ be the number of ways to add a point $n$ to a strong realization of $g$ in $\Pi$ to get a weak realization of $f$ in $\Pi$.  We thus get an equation
\[
B_f(\Pi) = \sum_{g \geq f'} \mu (g,f) A_g(\Pi).
\]

We complete the proof of the lemma by giving a method to determine $\mu(g,f)$ for any pair $g,f$ as a polynomial in $q$.  The method described by Rolland and Skorobogatov is subtle in the sense that two isomorphic boolean $n$-functions $g, g'$ do not necessarily satisfy $\mu(g,f) = \mu(g',f)$ \cite{rolland1993}.

Let $P_g$ be an ordered collection of $n-1$ points of $\Pi$ that gives a strong realization of $g$.  We want to determine the number of choices of the remaining $q^2+q+1-(n-1)$ points of $\Pi$ that we can add to  $P_g$ and label $n$ to get a weak realization of $f$ in $\Pi$.  We consider cases based on the index of the point $n$ in $f$.
\begin{enumerate}
\item Suppose the index of the point $n$ in $f$ is $0$.  Any point of $\Pi$ other than the points of $P_g$ can be added to $P_g$ to get a weak realization of $f$.  Therefore, $\mu(g,f) = q^2+q+1 - (n-1)$.

\item Suppose the index of the point $n$ in $f$ is $1$.  There is a unique full line $L$ of $f$ containing $n$.  Let $L'$ be the line of $f'$ containing the points of $L$ other than the point $n$. Note that $\#L' \ge 2$.  The strong realization $P_g$ determines a unique line $L_g$ of $\Pi$ corresponding to $L'$.

Any point of $L_g \setminus P_g$ can be labeled with $n$ to get a weak realization of $f$ in $\Pi$.  Therefore, $\mu(g,f) = q+1 - \#(L_g\cap P_g)$.

\item Suppose the index of the point $n$ in $f$ is $2$.  There are two distinct full lines $L_1$ and $L_2$ of $f$ that contain $n$.  Let $L_1' \subset \{1,2,\ldots, n-1\}$ be the subset of $\{1,2,\ldots, n-1\}$ contained in $L_1$ and define $L_2'$ analogously.  

It is possible that the points of $L_1'$ and the points of $L_2'$ are contained in a single full line of $g$.   In this case, this line determines a unique line $L_g$ of $\Pi$, and as in the previous case, any point of $L_g \setminus P_g$ can be labeled with $n$ to give a weak realization of $f$.  Therefore, $\mu(g,f) = q+1 - \#(L_g\cap P_g)$.

Now suppose that the points of $L_1'$ and the points of $L_2'$ are not contained in a single line of $g$.  The strong realization $P_g$ determines distinct lines $L_{g,1}$ and $L_{g,2}$ of $\Pi$ corresponding to $L_1'$ and $L_2'$, respectively.  In a weak realization of $f$, the point labeled with $n$ must lie on the intersection of the lines of $\Pi$ corresponding to $L_1$ and $L_2$.  If $L_{g,1} \cap L_{g,2} \in P_g$, then $\mu(g,f) = 0$.  Otherwise, $\mu(g,f) = 1$.

\end{enumerate}

\end{proof}

\ifdetails

\begin{example}
Let $f$ be the boolean 4-function with only the full line \{1,2,3\}. Then 4 has index 0. Now, $f'$ is the boolean 3-function with only the full line \{1,2,3\} (so $f'$ is just one long line.) The only choice for $g$ is $f'$ itself. Now, given a strong realization of $g$, we can add $4$ anywhere in the projective plane to get a weak realization of $f$, so long as we do not overlap with point 1, 2, or 3. There are $q^2 + q + 1 - 3$ ways to do so. Thus, 
\[
n_q(f) = \mu(g,f') m_q(g) = (q^2 + q - 2) m_q (g).
\]

Instead, we could have removed any other point. Say we consider the isomorphic boolean 4-function with only the full line \{2,3,4\}. Then 4 has index 1. Now, $f'$ is the boolean 3-function with no full lines. There are two choices for $g$, which we will call $g_1$, which has no full lines, and $g_2$, which has the full line $\{1,2,3\}$.

Given a strong realization of $g_1$, we can add point 4 anywhere not in use on the line \{2,3\}. A projective line has $q + 1$ points, and this line has 2 points already in use: we know that 1 is not on the line, but 2 and 3 are. So $\mu(g_1) = q + 1 - 2.$

Given a strong realization of $g_2$, we can add point 4 anywhere not in use on the line \{2,3\}; but 1, 2, 3 are all on this line. So $\mu(g_2) = q + 1 - 3$. Then

$$ n_q (f) = (q - 1)m_q (g_1) + (q - 2) m_q (g_2).$$
\end{example}
\begin{example}
Let $f$ be the boolean 6-function with only the full lines \{1,2,3,6\} and \{4,5,6\}. Then point 6 has index 2. Now $f'$ is the boolean 5-function with only the full line \{1,2,3\}. We can check that $g$ ranges over the following linear space functions, given by their lists of full lines:

$$g_1: \{1,2,3\}$$
$$g_2: \{1,2,3,4\}$$
$$g_3: \{1,2,3,5\}$$
$$g_4: \{1,2,3,4,5\}$$
$$g_5: \{1,2,3\}, \{1,4,5\}$$
$$g_6: \{1,2,3\}, \{2,4,5\}$$
$$g_7: \{1,2,3\}, \{3,4,5\}.$$

We know that point 6 needs to lie on the lines \{1,2,3\} and \{4,5\}. For $g_2, g_3, g_5, g_6, g_7$, these lines are distinct, and intersect at points 4, 5, 1, 2, 3 respectively; so for these, $\mu$ is 0. Otherwise, For $g_1$, we know that 4 and 5 do not lie on $\{1,2,3\}$, so the two lines intersect in one point, which is not in use. Here, $\mu(g_1, f')$ = 1. Finally, for $g_4$ we see that the lines intersect in one full line with 5 points already in use, so $\mu(g_4, f') = q + 1 - 5.$
\end{example}

As these examples indicate, the calculation of $\mu$ is straightforward given the full line lists of all the linear space functions and the knowledge of what set $g$ must range over. 

\fi

\ifdetails

Now we are in a position to state the algorithm in the form \ISSS gave it. Say we want to find $m_q (f)$ for some boolean $n$-function $f$.

\begin{alg}[Glynn; \ISS]
\begin{enumerate}
\item Find $n_q$ and $m_q$ for all boolean $1$-functions.
\item Assume we know $m_q(f)$ for all boolean $k$-functions $f$, and that all these $m_q$ are polynomials in $q$ and $m_q$ for. Use the above method to find $n_q$ for each linear space function on $k+1$ points, as a polynomial in $q$ and $m_q(s)$ for superfigurations $s$ on fewer points.
\item For each linear space function $f$ on $k+1$ points, compute $m_q (f)$ by Mobius inversion over the space of all boolean $(k+1)$-functions.
\item Repeat steps (2) and (3) until $k+1 = n$.
\end{enumerate}
\end{alg}
This algorithm has a prohibitively long runtime for $n = 10,$ because there are $2^{2^{10}}$ boolean 10-functions.  We avoid this problem as follows. We do induction on two levels rather than one. The first level is the number of points, as in the above version of the algorithm. The second level works along the partially ordered set: if we have data for all linear space functions $g > f$, we should be able to get data for $f$. This lets us focus just on linear space functions, which dramatically reduces the number of objects to consider.
\fi

Lemma \ref{nq calc lemma} explains why superfigurations arise in Glynn's algorithm and in Theorem \ref{glynn_alg_thm}. Since the method for computing $B_f(\Pi)$ only applies to those linear space functions $f$ with a point of index $0, 1$, or $2$, the algorithm cannot inductively find $B_f(\Pi)$ for linear space functions with all points of index at least $3$. In other words, the algorithm cannot determine $B_f(\Pi)$ for superfigurations.

The following algorithm inductively expresses each $A_s(\Pi)$ and $B_s(\Pi)$ in terms of the values $A_f(\Pi)$ for superfigurations $f$.

\begin{alg} \label{glynnsalg}
\hspace{0pt}
\begin{enumerate}[ {(}1{)} ]
\item Find $A_s(\Pi)$ and $B_s(\Pi)$ for the unique linear space function $s$ on $1$ point.
\item Assume that we have $A_t(\Pi)$ and $B_t(\Pi)$ for all linear space functions $t$ on $k$ points.
\item Use Lemma \ref{nq calc lemma} to find $B_t(\Pi)$ for every non-superfiguration linear space function $t$ on $k+1$ points.
\item Assume that $f$ is a linear space function on $k + 1$ points, and assume that we know $A_g(\Pi)$ for all linear space functions $g > f$. If $f$ is not a superfiguration, calculate $A_f (\Pi)$, by writing 
\[
A_f(\Pi) = B_f(\Pi) - \sum_{g > f}A_g(\Pi),
\]
and then use the expression found in (3) to express $B_f(\Pi)$ in terms of $A_s(\Pi)$ for $k$-point superfigurations $s$.
\item Repeat the previous step until we have calculated $A_f(\Pi)$ for all linear space functions $f$ on $k+1$ points.
\item Continue by induction until $k = n$.
\end{enumerate}
\end{alg}

This algorithm gives the number of strong or weak realizations of any $n$-point linear space function $L$ in a projective plane $\Pi$ as
\[
p(q) + \sum_{s \in S} p_s(q)A_s(q),
\]
where $S$ is the set of superfigurations on at most $n$ points, $A_s(q)$ is the number of strong realizations of superfiguration $s$ in $\Pi$, and $p(q)$ and the $p_s(q)$ are polynomials in $q.$  The sum in this expression involves many superfigurations $s$ that are isomorphic to each other.  Observing that isomorphic superfigurations have the same number of strong realizations in $\Pi$ allows us to group these terms together. We then get an expression where the sum is taken only over superfigurations on at most $n$ points up to isomorphism.

We implemented this algorithm in the computer algebra system Sage. Running it for up to $9$ points takes several minutes of computation time and outputs the formula for $9$-arcs from Theorem \ref{9arcs}.

\section{Counting larger arcs} \label{10arcresults}

Algorithm \ref{glynnsalg} computes $C_n(\Pi)$ in terms of $A_s(\Pi)$ for all superfigurations $s$ for any value of $n$. In particular, the formula for $10$-arcs in general projective planes is now within reach. However, we run into problems due to the complexity of the algorithm, which has runtime roughly proportional to the square of the number of linear space functions on $n$ points.

The complete list of $n$-point linear spaces can be determined by computing the list of hypergraphs on $n$ vertices under the constraints that the minimum set size is $3$ and the intersection of any two sets is of size at most 1 (see \cite{betten, oeis}). To restrict attention to superfigurations, we impose the additional condition that the minimum vertex degree is $3$. For $n \leq 11$, McKay's \textit{Nauty} software can quickly compute all such hypergraphs up to isomorphism (see \cite{mckay2014}). The first line of the table below matches the computations of Betten and Betten \cite{betten}.\\

\begin{center}
\begin{tabular} {@{}lllllll@{}}
\toprule
\multicolumn{7}{c}{\textbf{Counts of Linear Spaces}}                                                    \\ \midrule
$n$                            & 7  & 8  & 9   & 10   & 11     & 12      \\
Linear spaces on $n$ points & 24 & 69 & 384 & 5250 & 232929 & 28872973 \\
Superfigurations     & 1  & 1  & 10  & 151  & 16234  & $>$179000        \\
Configurations       & 1 & 1 & 3 & 10 & 31 & 229 \\ \bottomrule
\end{tabular}
\end{center}

The fast growth of these functions indicates the increasing difficulty of applying Algorithm \ref{glynnsalg}. In particular, we found that the prohibitively high runtime comes from of the difficulty of using Lemma \ref{nq calc lemma} to calculate so many values of $B_s(\Pi)$.
We introduce a variant of this algorithm that partially circumvents this problem.

Recall that the number of weak realizations of a $n$-arc is given by
\[
B_a(\Pi) = \sum_{g \geq a} A_g(\Pi)
\]
where $g$ ranges over all linear space functions on $n$ points. We may therefore express the strong realizations of the $n$-arc linear space function as
\begin{align*}
A_a(\Pi) &= B_a(\Pi) - \sum_{g > a} A_g(\Pi) \\
&= B_a(\Pi) - \sum_{\substack{g>a \\ g \text{ not a superfiguration}}} A_g(\Pi) - \sum_{\substack{s > a \\ s \text{ a superfiguration}}} A_s(\Pi),
\end{align*}
where the first sum ranges over linear space functions $g$ that are \textit{not} superfigurations, and the second sum ranges over superfigurations $s$ only.

Choose a linear space function $g$ that is minimal with respect to the partial order $\ge$ among the index set of the first sum; that is, there does not exist any linear space function $g'$ occurring in the first sum with $g>g'$. We say that $g$ is a \emph{minimal non-superfiguration} of this formula. Apply the substitution
\[
A_g(\Pi) = B_g(\Pi) - \sum_{h > g} A_h(\Pi).
\]
This eliminates the $A_g(\Pi)$ term from our formula, leaving only terms $A_h(\Pi)$ for $h > g$. By repeated applications of this substitution to a minimal non-superfiguration in the formula, we arrive at an expression of the form
\[
A_a(\Pi) = \sum_{ g \text{ not a superfiguration}} k(g)B_g(\Pi) + \sum_{s \text{ a superfiguration}} l(s)A_s(\Pi),
\]
where the $k(g)$ and $l(s)$ are integers.  Using the observation that isomorphic linear spaces have the same number of strong realizations in $\Pi$ we can group terms together to get a sum over linear spaces up to isomorphism.

We can now give a formula for $n$-arcs where each instance of $B_g(\Pi)$ is replaced by a polynomial in $q$ and $A_t(\Pi)$ for superfigurations $t$ on up to $n-1$ points. This substitution does not affect the coefficients of the $A_s(\Pi)$ for superfigurations $s$ on $n$ points. Therefore, the values $l(s)$, which we have already calculated, are the coefficients of influence for the $n$-point superfigurations. We state this as a lemma.

\begin{lemma}
In the formula for $n$-arcs given in Theorem \ref{glynn_alg_thm}, 
the coefficient of influence of each $n$-point superfiguration is a constant.
\end{lemma}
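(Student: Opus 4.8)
The plan is to make precise the two-stage reduction sketched immediately before the lemma and to track exactly where the coefficient of an $n$-point superfiguration could acquire a dependence on $q$. Since $C_n(\Pi) = A_a(\Pi)$, where $a$ is the $n$-arc linear space function, it suffices to show that in the final expression for $A_a(\Pi)$ the coefficient multiplying $A_s(\Pi)$ is an integer, independent of $q$, for every superfiguration $s$ on exactly $n$ points. I would establish this by showing (i) the first reduction stage, which eliminates all non-superfiguration $A$-terms, uses only integer coefficients and terminates, so the coefficients $l(s)$ of the surviving $A_s(\Pi)$ are integers; and (ii) the second stage, which rewrites the surviving $B_g(\Pi)$ terms, only introduces superfigurations on at most $n-1$ points and therefore cannot alter the coefficient of any $n$-point superfiguration.

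For stage (i), I would begin from the identity $A_a(\Pi) = B_a(\Pi) - \sum_{g > a} A_g(\Pi)$, where $g$ runs over the linear space functions on $n$ points lying strictly above $a$, that is, those possessing at least one collinear triple. Each such $g$ is either a superfiguration or, by definition, has a point of index at most $2$. I would repeatedly select a non-superfiguration $g$ that is minimal among those still appearing and apply the substitution $A_g(\Pi) = B_g(\Pi) - \sum_{h > g} A_h(\Pi)$. Every substitution multiplies existing coefficients by $-1$ and adds them to other integer coefficients, so integrality is preserved throughout. Termination follows from finiteness and well-foundedness of the poset of linear space functions on $n$ points: eliminating a non-superfiguration $g$ introduces only $A$-terms strictly above $g$, so once eliminated $g$ never reappears, and there are only finitely many non-superfigurations to remove. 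The outcome is an expression $A_a(\Pi) = \sum_{g \text{ non-sf}} k(g) B_g(\Pi) + \sum_{s \text{ sf}} l(s) A_s(\Pi)$ with all $k(g), l(s) \in \Z$; in particular $l(s)$ is an integer for each $n$-point superfiguration $s$.

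For stage (ii), each remaining $B_g(\Pi)$ comes from a non-superfiguration $g$ on $n$ points, which has a point of index at most $2$, so Lemma \ref{nq calc lemma} applies and rewrites $B_g(\Pi)$ as a polynomial in $q$ and the values $A_h(\Pi)$ for linear space functions $h$ on $n-1$ points. Invoking the correctness of Algorithm \ref{glynnsalg} applied to $n-1$ points, each such $A_h(\Pi)$ is itself a polynomial in $q$ plus a combination of $A_t(\Pi)$ over superfigurations $t$ on at most $n-1$ points. Consequently the entire $\sum_g k(g) B_g(\Pi)$ contribution involves only superfigurations on at most $n-1$ points, together with a pure polynomial in $q$ that is absorbed into $p(q)$, and it leaves every coefficient $l(s)$ attached to an $n$-point superfiguration untouched. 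Finally, grouping isomorphic $n$-point superfigurations sums finitely many integers $l(s)$, which is again an integer. Hence the coefficient of influence of each $n$-point superfiguration equals this integer and is constant in $q$.

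The main obstacle I anticipate is the bookkeeping required to make the termination of stage (i) rigorous and to confirm that stage (ii) never reintroduces an $n$-point superfiguration. The latter point is the conceptual crux: it rests on the fact that Lemma \ref{nq calc lemma} strictly decreases the number of points, so a reduction applied to an $n$-point non-superfiguration can only ever produce superfigurations of lower order. This is precisely why the top-order superfigurations escape any polynomial weighting, while lower-order ones, such as the Fano plane and the M\"obius--Kantor configuration in Theorem \ref{9arcs}, may receive genuinely $q$-dependent coefficients.
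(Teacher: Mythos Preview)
Your proposal is correct and follows essentially the same two-stage argument the paper presents in the paragraphs immediately preceding the lemma: first perform the M\"obius-type elimination of non-superfiguration $A$-terms to obtain integer coefficients $l(s)$, then observe that replacing each remaining $B_g(\Pi)$ via Lemma~\ref{nq calc lemma} only introduces superfigurations on at most $n-1$ points and hence cannot modify $l(s)$ for an $n$-point superfiguration $s$. Your write-up supplies more explicit justification for termination and integrality than the paper does, but the underlying idea is identical.
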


Let us consider the implications for $n = 10$. Of the $163$ superfigurations on up to $10$ points, $151$ are on exactly $10$ points. Therefore, the algorithm just described calculates $151$ of the $163$ coefficients of influence without finding any values of $B_g(\Pi)$.

Computing the coefficients of influence for the remaining $12$ superfigurations on at most $9$ points would be quite computationally intensive. The table below gives the coefficients of influence for each of the superfigurations ${10}_{13}, {10}_{14},\ldots, {10}_{163}$. These superfigurations are defined at the website \cite{website}, which we have created to organize information related to counting $10$-arcs.
 
The values in the table below were obtained by running an implementation of our algorithm in Sage on the ``Grace'' High Performance Computing cluster at Yale University. Thirty-two IBM NeXtScale nx360 M4 nodes each running twenty Intel Xeon E5-2660 V2 processor cores completed the parallelized algorithm in several hours.
%\newpage
\begin{center}\resizebox{\columnwidth}{!}{
\begin{tabular} {@{}clclclclclclclclclclclclc@{}}
\toprule
\multicolumn{12}{c}{\textbf{Coefficients of Influence in the Formula for $C_{10}(\Pi)$}} \\
$s$ \hspace{.7cm} & $p_s(q)$ \hspace{.2cm}  &$s$ \hspace{.7cm} & $p_s(q)$ \hspace{.2cm}  &$s$ \hspace{.7cm} & $p_s(q)$ \hspace{.2cm}  &$s$ \hspace{.7cm} & $p_s(q)$ \hspace{.2cm}  &$s$ \hspace{.7cm} & $p_s(q)$ \hspace{.2cm}  &$s$ \hspace{.7cm} & $p_s(q)$ \\\midrule
${10}_{13}$ & $27$ & ${10}_{39}$ & $-3$ & ${10}_{65}$ & $0$ & ${10}_{91}$ & $-1$ & ${10}_{117}$ & $2$ & ${10}_{143}$ & $-2$ \\
${10}_{14}$ & $27$ & ${10}_{40}$ & $-3$ & ${10}_{66}$ & $0$ & ${10}_{92}$ & $-1$ & ${10}_{118}$ & $-1$ & ${10}_{144}$ & $-2$\\
${10}_{15}$ & $27$ & ${10}_{41}$ & $-3$ & ${10}_{67}$ & $0$ & ${10}_{93}$ & $-1$ & ${10}_{119}$ & $1$ & ${10}_{145}$ & $-1$\\
${10}_{16}$ & $1$ & ${10}_{42}$ & $-3$ & ${10}_{68}$ & $0$ & ${10}_{94}$ & $-1$ & ${10}_{120}$ & $-1$ & ${10}_{146}$ & $-2$\\
${10}_{17}$ & $1$ & ${10}_{43}$ & $-3$ & ${10}_{69}$ & $0$ & ${10}_{95}$ & $-1$ & ${10}_{121}$ & $-1$ & ${10}_{147}$ & $-2$\\
${10}_{18}$ & $1$ & ${10}_{44}$ & $-3$ & ${10}_{70}$ & $0$ & ${10}_{96}$ & $-1$ & ${10}_{122}$ & $-1$ & ${10}_{148}$ & $-2$\\
${10}_{19}$ & $1$ & ${10}_{45}$ & $-3$ & ${10}_{71}$ & $0$ & ${10}_{97}$ & $-1$ & ${10}_{123}$ & $-1$ & ${10}_{149}$ & $-2$\\
${10}_{20}$ & $1$ & ${10}_{46}$ & $-3$ & ${10}_{72}$ & $-1$ & ${10}_{98}$ & $-1$ & ${10}_{124}$ & $2$ & ${10}_{150}$ & $1$\\
${10}_{21}$ & $1$ & ${10}_{47}$ & $9$ & ${10}_{73}$ & $-1$ & ${10}_{99}$ & $3$ & ${10}_{125}$ & $-1$ & ${10}_{151}$ & $9$\\
${10}_{22}$ & $1$ & ${10}_{48}$ & $9$ & ${10}_{74}$ & $-1$ & ${10}_{100}$ & $3$ & ${10}_{126}$ & $-1$ & ${10}_{152}$ & $9$\\
${10}_{23}$ & $1$ & ${10}_{49}$ & $9$ & ${10}_{75}$ & $-1$ & ${10}_{101}$ & $3$ & ${10}_{127}$ & $2$ & ${10}_{153}$ & $2$\\
${10}_{24}$ & $1$ & ${10}_{50}$ & $9$ & ${10}_{76}$ & $-1$ & ${10}_{102}$ & $3$ & ${10}_{128}$ & $3$ & ${10}_{154}$ & $1$\\
${10}_{25}$ & $1$ & ${10}_{51}$ & $9$ & ${10}_{77}$ & $-1$ & ${10}_{103}$ & $3$ & ${10}_{129}$ & $5$ & ${10}_{155}$ & $2$\\
${10}_{26}$ & $-2$ & ${10}_{52}$ & $4$ & ${10}_{78}$ & $-1$ & ${10}_{104}$ & $-1$ & ${10}_{130}$ & $4$ & ${10}_{156}$ & $4$\\
${10}_{27}$ & $-2$ & ${10}_{53}$ & $4$ & ${10}_{79}$ & $-1$ & ${10}_{105}$ & $0$ & ${10}_{131}$ & $12$ & ${10}_{157}$ & $5$\\
${10}_{28}$ & $-2$ & ${10}_{54}$ & $4$ & ${10}_{80}$ & $-1$ & ${10}_{106}$ & $-1$ & ${10}_{132}$ & $0$ & ${10}_{158}$ & $6$\\
${10}_{29}$ & $-2$ & ${10}_{55}$ & $6$ & ${10}_{81}$ & $-1$ & ${10}_{107}$ & $-1$ & ${10}_{133}$ & $0$ & ${10}_{159}$ & $18$\\
${10}_{30}$ & $-2$ & ${10}_{56}$ & $6$ & ${10}_{82}$ & $-1$ & ${10}_{108}$ & $0$ & ${10}_{134}$ & $0$ & ${10}_{160}$ & $1$\\
${10}_{31}$ & $16$ & ${10}_{57}$ & $6$ & ${10}_{83}$ & $-1$ & ${10}_{109}$ & $-1$ & ${10}_{135}$ & $0$ & ${10}_{161}$ & $1$\\
${10}_{32}$ & $-2$ & ${10}_{58}$ & $6$ & ${10}_{84}$ & $-1$ & ${10}_{110}$ & $1$ & ${10}_{136}$ & $0$ & ${10}_{162}$ & $10$\\
${10}_{33}$ & $-2$ & ${10}_{59}$ & $19$ & ${10}_{85}$ & $-1$ & ${10}_{111}$ & $2$ & ${10}_{137}$ & $-1$ & ${10}_{163}$ & $-6$\\
${10}_{34}$ & $-2$ & ${10}_{60}$ & $-8$ & ${10}_{86}$ & $-1$ & ${10}_{112}$ & $1$ & ${10}_{138}$ & $-1$ &  & \\
${10}_{35}$ & $-3$ & ${10}_{61}$ & $19$ & ${10}_{87}$ & $-1$ & ${10}_{113}$ & $2$ & ${10}_{139}$ & $0$ &  & \\
${10}_{36}$ & $-3$ & ${10}_{62}$ & $19$ & ${10}_{88}$ & $-1$ & ${10}_{114}$ & $2$ & ${10}_{140}$ & $-1$ &  & \\
${10}_{37}$ & $-3$ & ${10}_{63}$ & $-12$ & ${10}_{89}$ & $-1$ & ${10}_{115}$ & $1$ & ${10}_{141}$ & $-2$ &  & \\
${10}_{38}$ & $-3$ & ${10}_{64}$ & $-8$ & ${10}_{90}$ & $-1$ & ${10}_{116}$ & $1$ & ${10}_{142}$ & $-1$ &  & \\
\bottomrule
\end{tabular}}
\label{coeffs}
\end{center}

\section{Application to Counting 10-arcs}\label{sec_further}

We noted in the introduction that $C_n(q)$ is given by a polynomial in $q$ for all $n \le 6$, and that $C_7(q), C_8(q)$, and $C_9(q)$ have quasipolynomial formulas.  It is natural to ask how $C_n(q)$ varies with $q$ for larger fixed values of $n$.  In forthcoming work, Elkies \cite{elkies} analyzes $A_s(q)$ as a function of $q$ for each of the $10_3$-configurations and shows that several of these functions are not quasipolynomial. In a follow-up paper \cite{10Arcs}, we analyze $A_s(q)$ for each of the $151$ superfigurations on $10$ points, finding several more non-quasipolynomial examples.  As a consequence, we conclude that $C_{10}(q)$ is not quasipolynomial (see \cite{elkies,10Arcs}).
\subsection{Remarks on Superfigurations and their Coefficients of Influence}

The ten classical configurations ${10}_{16}, {10}_{17}, {10}_{18},\ldots , {10}_{25}$ all have coefficient of influence equal to $1$. The three superfigurations that contain only $9$ lines, ${10}_{13}, 10_{14}$, and ${10}_{15}$, share the coefficient of influence $27$. All of the fifteen superfigurations with coefficient of influence $0$ have either $11$ or $12$ lines. These and other runs of equal coefficients suggest that superfigurations with similar structure tend to share coefficients of influence. 

In the classical theory of configurations, every configuration has a ``dual'' configuration obtained by interchanging points and lines (see for example \cite[Section 1.3]{batten2}). Similarly, a superfiguration of $n$ points and $m$ lines is dual to another superfiguration of $m$ points and $n$ lines.  Perhaps contrary to expectation, dual superfigurations may have different coefficients of influence, as with superfigurations ${10}_{61}$ and ${10}_{63}$. Further, dual superfigurations may have different numbers of strong realizations in a projective plane $\Pi$. As an example, the superfigurations $9_6$ and ${10}_{13}$ are dual; however, $9_6$ has strong realizations in Desarguesian planes of even order greater than $2$, while ${10}_{13}$ does not \cite{website}.

A superfiguration with an equal number of points and lines may be \emph{self-dual}: that is, the superfiguration produced by interchanging points and lines is isomorphic to the original one. Our online catalogue of superfigurations shows that 41 of the 45 superfigurations with $10$ points and $10$ lines are self-dual \cite{website}. It is unclear whether to expect self-duality to be a common property for larger superfigurations.

\subsection*{Acknowledgments}
We thank the mathematics department at Yale University and the Summer Undergraduate Research at Yale (SUMRY) program for providing us with the opportunity to conduct this research. SUMRY is supported in part by NSF grant CAREER DMS-1149054. The first author is supported by NSA Young Investigator Grant H98230-16-10305 and by an AMS-Simons Travel Grant. The first author thanks Noam Elkies for helpful conversations. We thank the referee for many helpful comments.

We would also like to extend our gratitude to Sam Payne and Jos\'e Gonz\'alez for helping to organize SUMRY, and to Alex Reinking for valuable HPC support. We thank the Yale Center for Research Computing for allowing us to use the High Performance Computing resources.

\end{document}